\theoremstyle{plain}
\newtheorem{theorem}{Theorem}
\numberwithin{equation}{section}
\newcommand{\ra}{\rightarrow}
\newcommand{\R}{\mathbb{R}}
\begin{document}

\title {Cubic Hamiltonians}

\date{}

\author[P.L. Robinson]{P.L. Robinson}

\address{Department of Mathematics \\ University of Florida \\ Gainesville FL 32611  USA }

\email[]{paulr@ufl.edu}

\subjclass{} \keywords{}

\begin{abstract}

We determine a precise necessary and sufficient condition for completeness of the Hamiltonian vector field associated to a homogeneous cubic polynomial on a symplectic plane. 

\end{abstract}

\maketitle

\bigbreak

\section{Introduction}

The flow of the Hamiltonian vector field generated by a smooth function on a symplectic manifold is a familiar object of study. Let the symplectic manifold be simply a symplectic vector space: the Hamiltonian flow generated by a homogeneous linear function is a one-parameter group of translations; the Hamiltonian flow generated by a homogeneous quadratic function is a one-parameter group of linear symplectic transformations. In each of these two cases, the Hamiltonian flow is complete: each maximal integral curve of the Hamiltonian vector field is defined for all time. The case of cubic Hamiltonian functions is different: for some cubics the flow is complete whereas for others it is incomplete. 

\medbreak 

Our primary objective in this paper is to establish a simple necessary and sufficient condition for the cubic $\psi$ on a symplectic plane $(Z, \Omega)$ to generate a complete Hamiltonian flow. In Section 1 we associate with $\psi$ a suitably symmetric linear map from $Z$ to the symplectic Lie algebra ${\rm sp} (Z, \Omega)$; following this map with the determinant yields a quadratic map $\Delta : Z \ra \R$. In Section 2 we analyze an arbitrary integral curve $z : I \ra Z$ of the Hamiltonian vector field $\xi^{\psi}$ defined by $\psi$; we find that the second time-derivative $\stackrel{\circ \circ}{z}$ equals $2 F z$, where the scalar function $F := \Delta \circ z: I \ra \R$ satisfies the equation $\stackrel{\circ \circ}{F} \: =  6 F^2$ familiar from the theory of elliptic functions. In Section 3 we achieve our primary objective, proving that the Hamiltonian vector field $\xi^{\psi}$ is complete if and only if the determinant $\Delta$ is identically zero; beyond this, we comment on the nonconstant integral curves of $\xi^{\psi}$ in the complete case and the incomplete case. Finally, we assemble several remarks on issues arising from the main body of the paper: in particular, we remark that $\Delta$ is identically zero if and only if $\psi$ is a monomial; these remarks we plan to develop more fully in subsequent papers. 

\medbreak 

In a subsequent paper we also plan to present a similar treatment of quartic Hamiltonian functions; for now, we merely note one difference between the cubic case and the quartic case. In the cubic case, the scalar function $F$ satisfies the differential equation $\stackrel{\circ \circ}{F} \: =  6 F^2$ whose elliptic solutions are always Weierstrass Pe functions associated to triangular lattices, with $g_2$ zero; in the quartic case, the corresponding scalar functions include Weierstrass functions associated to rectangular lattices, with $g_2$ nonzero. 

\medbreak

\section{Symplectic Algebra} 

Let $(Z, \Omega)$ be a real symplectic vector space: thus, $Z$ is a vector space and $\Omega : Z \times Z \ra \R$ a nonsingular alternating bilinear form. Though it is not necessary for some of what we shall say, we suppose throughout that $Z$ is two-dimensional, so that $(Z, \Omega)$ is a symplectic {\it plane}. The symplectic algebra ${\rm sp} (Z, \Omega)$ is the (commutator bracket) Lie algebra comprising all linear maps $C: Z \ra Z$ such that for all $x, y \in Z$ 
$$ \Omega (C x, y) + \Omega (x, C y) = 0.$$
As a vector space, ${\rm sp} (Z, \Omega)$ is canonically isomorphic to the space of all symmetric bilinear forms on $Z$: to $C \in {\rm sp} (Z, \Omega)$ there corresponds the symmetric bilinear form 
$$Z \times Z \ra \R : (x, y) \mapsto \Omega (x, C y).$$

\medbreak 

Now, let $\psi : Z \ra \R$ be a homogeneous cubic polynomial. To $\psi$ we associate the (fully) symmetric trilinear function $\Psi : Z \times Z \times Z \ra \R$ with value at $(x, y, z) \in Z \times Z \times Z$ given by 
$$\Psi (x, y, z) = \psi (x + y + z) - \{\psi (y + z) + \psi (z + x) + \psi (x + y)\} + \psi (x) + \psi(y) + \psi(z).$$
When $z\in Z$ is fixed, $\Psi (x, y, z)$ is symmetric bilinear in $(x, y) \in Z \times Z$; it follows that there exists a unique $\Gamma_z \in {\rm sp} (Z, \Omega)$ such that for all $x, y \in Z$ 
$$\Psi (x, y, z) = 2 \Omega (x, \Gamma_z y).$$
Full symmetry of $\Psi$ guarantees that the resulting linear map 
$$\Gamma^{\psi} = \Gamma : Z \ra {\rm sp} (Z, \Omega)$$
is symmetric in the sense that for all $x, y \in Z$ 
$$\Gamma_x y = \Gamma_y x.$$
Note that if $z \in Z$ then 
$$2 \Omega (z, \Gamma_z z) = \Psi (z, z, z) = \{ 27 - (3 \times 8) + 3 \} \psi (z) = 6 \psi (z) $$
or 
$$\psi (z) = \frac{1}{3}  \Omega (z, \Gamma_z z).$$

\medbreak

Differentiation of this formula for $\psi$ yields the result that if $v, z \in Z$ then 
$$\psi_z' (v) =  \frac{1}{3} \{\Omega(v, \Gamma_z z) +  \Omega(z, \Gamma_v z) +\Omega(z, \Gamma_z v)\}$$
whence by symmetry of $\Gamma : Z \ra {\rm sp} (Z, \Omega)$ it follows that 
$$\psi_z' (v) =  \Omega(v, \Gamma_z z).$$
Of course, as $\psi$ is a cubic, the first derivative $\psi_z'$ is quadratic in $z \in Z$. As a bilinear form, the second derivative $\psi_z''$ at $z \in Z$ furnishes another means of introducing $\Psi$ and $\Gamma$: indeed, if also $x, y \in Z$ then 
$$\psi_z'' (y, x) = \Psi (x, y, z) = 2 \Omega (x, \Gamma_z y).$$
This equation represents $\psi_z''$ by $2 \Gamma_z$ relative to the symplectic form $\Omega$; consequently, the classical Hessian of $\psi$ is ${\rm Det}(2 \Gamma_z)$. 

\medbreak 

According to the Cayley-Hamilton theorem, if $z \in Z$ then 
$$\Gamma_z \Gamma_z - ({\rm Tr} \: \Gamma_z) \Gamma_z + ({\rm Det} \: \Gamma_z) I = 0$$
whence the fact that $\Gamma_z \in {\rm sp} (Z, \Omega)$ is traceless implies that
$$\Gamma_z \Gamma_z = - ({\rm Det} \: \Gamma_z) I.$$
We define the scalar function $\Delta^{\psi} = \Delta : Z \ra \R$ by requiring that for each $z \in Z$ 
$$\Delta(z) = - ({\rm Det} \: \Gamma_z) $$
so that 
$$\Gamma_z \Gamma_z = \Delta(z) I.$$

\medbreak 

\begin{theorem} \label{Delta} 
If $z \in Z$ then $\Delta(\Gamma_z z) = \Delta(z)^2.$
\end{theorem}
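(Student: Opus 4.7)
The plan is to exploit the Cayley--Hamilton relation $\Gamma_z \Gamma_z = \Delta(z) I$ together with the symmetry property $\Gamma_a b = \Gamma_b a$, evaluating everything at the vector $z$ itself. Setting $w := \Gamma_z z$, the starting observation is that
$$\Gamma_z w = \Gamma_z(\Gamma_z z) = \Delta(z)\, z$$
directly from $\Gamma_z \Gamma_z = \Delta(z) I$; the symmetry of $\Gamma$ then promotes this to
$$\Gamma_w z = \Gamma_z w = \Delta(z)\, z.$$

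With this identity in hand, the next step is a brief iterative computation:
$$\Gamma_w \Gamma_w z = \Gamma_w(\Gamma_w z) = \Gamma_w(\Delta(z) z) = \Delta(z)\, \Gamma_w z = \Delta(z)^2\, z.$$
On the other hand, the same Cayley--Hamilton relation applied now to $w$ in place of $z$ yields $\Gamma_w \Gamma_w = \Delta(w) I$, and hence $\Gamma_w \Gamma_w z = \Delta(w)\, z$. Comparing the two expressions gives $\Delta(w)\, z = \Delta(z)^2\, z$, from which the claim $\Delta(\Gamma_z z) = \Delta(z)^2$ follows whenever $z \neq 0$.

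I do not foresee any real obstacle; the only subtle point is that one extracts the scalar identity $\Delta(w) = \Delta(z)^2$ from the vector identity $\Delta(w)\, z = \Delta(z)^2\, z$ only when $z \neq 0$. The degenerate case $z = 0$ is disposed of trivially, since then $w = \Gamma_0 \cdot 0 = 0$ and both sides vanish because $\Delta$ is quadratic. In short, the whole argument pivots on the single observation that $\Gamma_z z$ is an eigenvector of both $\Gamma_z$ and $\Gamma_{\Gamma_z z}$ by virtue of symmetry, with eigenvalues that multiply to give $\Delta(z)^2$.
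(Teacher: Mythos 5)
Your proof is correct and follows essentially the same route as the paper: both arguments combine the symmetry identity $\Gamma_{\Gamma_z z} z = \Gamma_z \Gamma_z z$ with the Cayley--Hamilton relation $\Gamma_w \Gamma_w = \Delta(w) I$ to obtain $\Delta(\Gamma_z z)\, z = \Delta(z)^2\, z$ and then cancel $z \neq 0$. The only difference is organizational --- you record $\Gamma_w z = \Delta(z) z$ once and reuse it, where the paper applies the symmetry identity twice in sequence --- and your handling of the degenerate case $z = 0$ matches the paper's.
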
 

\begin{proof} 
If $z = 0$ then both sides of the alleged equation plainly vanish. If $z \neq 0$ then apply the special case $\Gamma_{\Gamma_z z} z = \Gamma_z \Gamma_z z $ of  symmetry repeatedly: a first application gives 
$$\Delta(\Gamma_z z) z = \Gamma_{\Gamma_z z} \Gamma_{\Gamma_z z} z  = \Gamma_{\Gamma_z z} \Gamma_z \Gamma_z z = \Gamma_{\Gamma_z z} \Delta(z) z$$
and a second application gives 
$$\Delta(z) \Gamma_{\Gamma_z z} z = \Delta(z) \Gamma_z \Gamma_z z = \Delta(z) \Delta(z) z = \Delta (z)^2 z$$
whence the alleged equation follows by cancellation. 
\end{proof} 

\medbreak 

\section{Cubic Hamiltonians} 

We shall now view $(Z, \Omega)$ as a symplectic manifold in the natural way. Thus, the vector space $Z$ is naturally a smooth manifold; if $z \in Z$ then there is a natural isomorphism from the vector space $Z$ to the tangent space $T_z Z$ sending $v \in Z$ to the directional derivative operator $v|_z \in T_z Z$ given by the rule that whenever $f: Z \ra \R$ is a smooth map, 
$$v|_z (f) = f_z' (v) = \frac{{\rm d}}{{\rm d} t} f(z + t v)|_{t = 0}.$$
Also, $\Omega$ serves double duty as a nonsingular alternating bilinear form on the vector space $Z$ and as a nonsingular closed two-form on the smooth manifold $Z$; explicitly, if $x, y, z \in Z$ then the value $\Omega_z$ of the two-form at $z$ is given by 
$$\Omega_z ( x|_z, y|_z) = \Omega (x, y). $$

\medbreak

When $f : Z \ra \R$ is a smooth (Hamiltonian) function, the corresponding Hamiltonian vector field $\xi^f \in {\rm Vec} Z$  on $Z$ is defined by the requirement 
$$\xi^f \lrcorner \: \Omega = - {\rm d} f$$
where $\lrcorner$ signifies contraction as usual. An integral curve of the vector field $\xi^f$ is a smooth map $z : I \ra Z$ (on some open interval $I \ni 0$) satisfying the Hamilton equations: for each $t \in I$ the tangent vector to $z$ at $t$ equals the value of $\xi^f$ at $z_t$, thus 
$$\stackrel{\circ}{z}_t \: = \xi^f_{z_t}.$$

\medbreak 

We shall focus on the case of a homogeneous cubic $\psi : Z \ra \R$ as Hamiltonian function. The value of $\xi^{\psi}$ at $z \in Z$ is a vector made tangent at $z$: say 
$$\xi^{\psi}_z = x^{\psi}(z) |_z$$  
with $x^{\psi} : Z \ra Z$ a smooth vector-valued function. Now, let $v, z \in Z$: on the one hand, 
$$(\xi^f \lrcorner \: \Omega)_z (v|_z) = \Omega_z (\xi^{\psi}_z, v|_z) = \Omega_z (x^{\psi}(z) |_z, v|_z) = \Omega (x^{\psi}(z), v) ; $$
on the other hand, 
$$- {\rm d} \psi_z (v|_z) = - \psi'_z (v) = - \Omega (v, \Gamma_z z) = \Omega (\Gamma_z z, v).$$
As the symplectic form $\Omega$ is nonsingular, it follows that 
$$ x^{\psi}(z) = \Gamma_z z.$$ 
Accordingly, the Hamilton equation for $z : I \ra Z$ reads 
$$\stackrel{\circ} {z}  \: = \Gamma_z z.$$ 

\medbreak 

Let $z : I \ra Z$ be a solution of this Hamilton equation. Take a further derivative: as $\Gamma$ is symmetric, 
$$\stackrel{\circ \circ}{z} \: = \Gamma_{\stackrel{\circ}{z}} z + \Gamma_z \stackrel{\circ}{z} \: = 2 \Gamma_z \stackrel{\circ}{z} \: = 2 \Gamma_z \Gamma_z z$$
by a further application of the Hamilton equation. Recall that if $w \in Z$ then $\Gamma_w \Gamma_w = \Delta(w) I$ and write 
$$F : = \Delta \circ z : I \ra \R.$$
It then follows that $z : I \ra Z$ satisfies the second-order equation 
$$\stackrel{\circ \circ}{z} \: = 2 F z.$$ 
Note here that $\Delta$ is defined on the whole space $Z$ while $F$ is defined only along the integral curve $z$. 

\begin{theorem} \label{F}
The scalar function $F$ satisfies the second-order equation 
$$\stackrel{\circ \circ}{F} \: = 6 F^2.$$
\end{theorem}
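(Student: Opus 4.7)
The plan is to differentiate $F = \Delta \circ z$ twice along the integral curve and recognize the resulting expression as $6 F^2$, invoking the second-order equation $\stackrel{\circ \circ}{z} \: = 2 F z$ established just before the statement together with Theorem \ref{Delta}.

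First I would observe that $\Delta : Z \ra \R$ is a homogeneous quadratic form on $Z$: indeed, $\Gamma_z$ depends linearly on $z$, and the determinant of a $2 \times 2$ matrix is quadratic in its entries. Let $B : Z \times Z \ra \R$ be the associated symmetric bilinear form, so that $\Delta(z) = B(z, z)$ and $\Delta'_z(v) = 2 B(z, v)$. By the chain rule,
$$\stackrel{\circ}{F} \: = 2 B(z, \stackrel{\circ}{z}), \qquad \stackrel{\circ \circ}{F} \: = 2 B(\stackrel{\circ}{z}, \stackrel{\circ}{z}) + 2 B(z, \stackrel{\circ \circ}{z}).$$

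Next I would substitute $\stackrel{\circ \circ}{z} \: = 2 F z$ into the second term to obtain $2 B(z, \stackrel{\circ \circ}{z}) = 4 F \Delta(z) = 4 F^2$. For the first term, the Hamilton equation $\stackrel{\circ}{z} \: = \Gamma_z z$ gives $2 B(\stackrel{\circ}{z}, \stackrel{\circ}{z}) = 2 \Delta(\Gamma_z z)$, and here Theorem \ref{Delta} applies directly to yield $\Delta(\Gamma_z z) = \Delta(z)^2 = F^2$, hence $2 B(\stackrel{\circ}{z}, \stackrel{\circ}{z}) = 2 F^2$. Summing, $\stackrel{\circ \circ}{F} \: = 2 F^2 + 4 F^2 = 6 F^2$.

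The computation is essentially mechanical once one knows that $\Delta$ is quadratic; the only nontrivial ingredient is the identity $\Delta(\Gamma_z z) = \Delta(z)^2$, which is precisely Theorem \ref{Delta}. Thus the main obstacle has already been dispatched in the previous section, and the proof here reduces to assembling the pieces. A minor alternative, should one prefer to avoid the quadratic-form viewpoint, would be to work directly from the identity $\Gamma_z \Gamma_z = \Delta(z) I$ and its polarization, but the route via $B$ is the cleanest.
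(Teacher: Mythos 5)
Your proof is correct and is essentially the paper's own argument in different notation: the paper differentiates the operator identity $F I = \Gamma_z \Gamma_z$ twice, and its cross term $\Gamma_{\stackrel{\circ}{z}} \Gamma_z + \Gamma_z \Gamma_{\stackrel{\circ}{z}}$ is exactly your polarization $2 B(z, \stackrel{\circ}{z})$ times $I$. The decomposition into the same three contributions ($2F^2 + 4F^2$ versus the paper's $2F^2 + 2F^2 + 2F^2$), the substitution of $\stackrel{\circ \circ}{z} \: = 2 F z$, and the appeal to Theorem \ref{Delta} for the $\Delta(\Gamma_z z) = \Delta(z)^2$ step all coincide, so this is the same proof.
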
 

\begin{proof} 
From the definition 
$$F I = \Gamma_z \Gamma_z$$
we deduce by repeated differentiation that 
$$\stackrel{\circ}{F} I = \Gamma_{\stackrel{\circ}{z}} \Gamma_z + \Gamma_z  \Gamma_{\stackrel{\circ}{z}}$$
and 
$$\stackrel{\circ \circ}{F} I =  \Gamma_{\stackrel{\circ \circ}{z}} \Gamma_z + 2\Gamma_{\stackrel{\circ}{z}} \Gamma_{\stackrel{\circ}{z}} + \Gamma_z  \Gamma_{\stackrel{\circ \circ}{z}}.$$
Here, the first and last terms on the right both equal $2 F \Gamma_z \Gamma_z = 2 F^2 I$ on account of $\stackrel{\circ \circ}{z} \: = 2 F z$ while $\Gamma_{\stackrel{\circ}{z}} \Gamma_{\stackrel{\circ}{z}}$ equals $F^2 I$ on account of $\stackrel{\circ} {z} \: = \Gamma_z z$ and Theorem \ref{Delta}. 
\end{proof} 

We may at once deduce a first-order integral of this second-order equation: multiply through by $2 \stackrel{\circ}{F}$ to obtain 
$$ 2 \stackrel{\circ}{F} \stackrel{\circ \circ}{F} = 12 F^2 \stackrel{\circ}{F}$$
from which there follows 
$$(\stackrel{\circ}{F} )^2 = 4 F^3 - g_3$$ 
for some real constant $g_3$. This notation is deliberately chosen to accord with the theory of elliptic functions. In fact, the solutions to this first-order differential equation are as follows: \par
$\bullet$ if $g_3$ is nonzero then $F(t) = \wp (t - a)$ for some real $a$ where $\wp$ is the Weierstrass Pe function associated to a triangular lattice (the so-called equianharmonic case); \par 
$\bullet$ if $g_3$ is zero then either $F(t) = (t - a)^{-2}$ for some real $a$ or $F$ is identically zero. 

\medbreak 

Note that when $F$ is a (shifted) Weierstrass Pe function, $\stackrel{\circ \circ}{z} \: = 2 F z$ is a (vectorial) Lam\'e equation and may be solved accordingly; for example, see page 285 of [Forsyth].

\medbreak

\section{Completeness Characterized} 

We continue to let $\Gamma : Z \ra {\rm sp} (Z, \Omega)$ be the symmetric linear map corresponding to the homogeneous cubic $\psi : Z \ra \R$ on the symplectic plane $(Z, \Omega)$; we also continue to let $z : I \ra Z$ be an integral curve of the associated Hamiltonian vector field $\xi^{\psi}$. We shall suppose that the curve $z$ has  initial point $z_0$ and hence initial velocity $\stackrel{\circ}{z}_0 \; = \Gamma_{z_0}  z_0$. Our aim in this section is to decide precisely when such an integral curve may be defined for all time; that is, precisely when the maximal domain of definition $I$ is $\R$ itself. 

\medbreak

The critical case is decided immediately. Let $\xi^{\psi}$ (equivalently, ${\rm d} \psi$) vanish at $z_0$; thus, $z$ has initial velocity $\stackrel{\circ}{z}_0 \; = \Gamma_{z_0}  z_0 = 0$. In this critical case, the solution $z : I \ra Z$ is plainly given by $z_t = z_0$ for all $t \in I$ and the maximal $I$ is indeed $\R$. In this connexion, note further that if an integral curve $z : I \ra Z$ vanishes at any point then so does its velocity vector and hence $z$ itself is identically zero. 

\medbreak 

Now let the integral curve $z : I \ra Z$ be other than critical: thus, $\Gamma_{z_0} z_0 = \; \stackrel{\circ}{z}_0 \; \neq 0$ and of course $z_0 \neq 0$. We distinguish two cases. 

\medbreak 

For the first case, suppose there exists some $s \in I$ such that $0 \neq F(s) = \Delta (z_s)$ and therefore $\stackrel{\circ \circ}{F}(s) = \stackrel{\circ}{F}(s)^2 > 0.$ The comments after Theorem \ref{F} show that $F$ has a double pole at some real $a$; thus $\Gamma_{z_t} \Gamma_{z_t} = F(t)I$ is unbounded as $t \ra a$ and so $z_t$ itself is unbounded as $t \ra a$. In this case, the maximal domain of $z$ omits $a$ and thereby falls short of $\R$. 

\medbreak 

For the second case, suppose that $F(t) = 0$ whenever $t \in I$. Note that the linear map $\Gamma_{z_0}$ kills $\Gamma_{z_0} z_0$ (because $\Gamma_{z_0} \Gamma_{z_0} = F(0) I = 0$) but does not kill $z_0$ (because $\Gamma_{z_0} z_0 = \; \stackrel{\circ}{z}_0 \; \neq 0$); thus $z_0$ and $\stackrel{\circ}{z}_0 $ constitute a basis for the plane $Z$ and so 
$$\{ s(z_0 + t \stackrel{\circ}{z}_0) : s, t \in \R \} = (Z \setminus \R \stackrel{\circ}{z}_0)  \cup \{ 0 \}.$$
The supposition $F \equiv 0$ implies that $\stackrel{\circ \circ}{z} \: = 2 F z \equiv 0$ so that $z_t = z_0 + t \stackrel{\circ}{z}_0$ for all $t \in I$; essentially as in the critical case, the maximal  $I$ is therefore $\R$. Now $\Delta$ vanishes on $z_0 + t \stackrel{\circ}{z}_0$ whenever $t \in \R$ (as $F$ is identically zero) and hence vanishes on $s(z_0 + t \stackrel{\circ}{z}_0)$ whenever $s, t \in \R$ (as $\Delta$ is homogeneous); the continuous function $\Delta$ now vanishes on the dense set $ (Z \setminus \R \stackrel{\circ}{z}_0)  \cup \{ 0 \}$ and therefore vanishes on the whole of $Z$. This proves that if $\Delta$ vanishes on the image of some non-critical integral curve then $\Delta$ vanishes identically. 

\medbreak 

We may now marshal these facts towards the following result. 

\begin{theorem} 
Let $\psi: Z \ra \R$ be a homogeneous cubic and $\Delta^{\psi}$ the associated determinant. \par 
$\bullet$ If $\Delta^{\psi} \equiv 0$ then $\xi^{\psi}$ is complete; each non-constant integral curve is an affine line. \par 
$\bullet$ If $\Delta^{\psi} \not \equiv 0$ then $\xi^{\psi}$ is incomplete; only the constant integral curves are defined for all time. 
\end{theorem}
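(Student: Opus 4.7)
The plan is to marshal the three case-analyses already carried out just before the theorem statement (the critical case, the case in which $F$ is somewhere nonzero, and the case in which $F$ vanishes identically along the integral curve) into a clean dichotomy on $\Delta^{\psi}$. Since the paper already performs the hard analytic work — identifying the shape of $F$ via the elliptic equation $\stackrel{\circ \circ}{F} \: = 6 F^2$, and producing a basis $\{ z_0, \stackrel{\circ}{z}_0 \}$ in the $F \equiv 0$ case — the proof amounts to selecting the appropriate case according to whether $\Delta^{\psi} \equiv 0$ or not.

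For the first bullet I would assume $\Delta^{\psi} \equiv 0$ and take an arbitrary integral curve $z : I \ra Z$ with initial data $(z_0, \stackrel{\circ}{z}_0)$. Since $\Delta^{\psi}$ vanishes identically on $Z$, the scalar $F = \Delta \circ z$ vanishes identically on $I$, so the second-order equation $\stackrel{\circ \circ}{z} \: = 2 F z$ degenerates to $\stackrel{\circ \circ}{z} \: \equiv 0$. Integrating gives $z_t = z_0 + t \stackrel{\circ}{z}_0$, which is defined for all $t \in \R$; this establishes completeness. Non-constancy is equivalent to $\stackrel{\circ}{z}_0 \neq 0$, in which case this formula describes an affine line.

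For the second bullet I would assume $\Delta^{\psi} \not \equiv 0$ and let $z : I \ra Z$ be any non-constant integral curve. Non-constancy means $\stackrel{\circ}{z}_0 \: = \Gamma_{z_0} z_0 \neq 0$. There are two sub-cases to rule out. If $F(s) \neq 0$ for some $s \in I$, then the analysis following Theorem \ref{F} forces $F$ to have a double pole at some real $a$; because $\Gamma_{z_t} \Gamma_{z_t} = F(t) I$ and $\stackrel{\circ \circ}{z} \: = 2 F z$, the curve $z_t$ must itself be unbounded as $t \to a$, so $a \notin I$ and $I \subsetneq \R$. If instead $F \equiv 0$ on $I$, then the dense-set argument immediately preceding the theorem shows that $\Delta^{\psi}$ vanishes on all of $Z$, contradicting our hypothesis; so this sub-case is impossible. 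Hence every non-constant integral curve fails to extend to $\R$, while constant integral curves (where $\Gamma_{z_0} z_0 = 0$) are trivially defined for all time.

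The only mildly subtle step is the transfer from the blow-up of $F$ to the blow-up of $z$ in the first sub-case; but this is immediate from the identity $\Gamma_{z_t} \Gamma_{z_t} = F(t) I$ together with the linearity of $\Gamma$ in its subscript, which was already observed in the discussion before the theorem. Beyond that, the proof is a straightforward bookkeeping exercise, with each bullet reducing to a single one of the cases already completed.
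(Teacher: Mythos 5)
Your proposal is correct and follows essentially the same route as the paper: both reduce the theorem to the case analysis carried out immediately beforehand, using $\Delta \equiv 0 \Rightarrow F \equiv 0 \Rightarrow \stackrel{\circ\circ}{z} \equiv 0$ for completeness, and for incompleteness combining the finite-time blow-up of $F$ (hence of $z$, via $\Gamma_{z_t}\Gamma_{z_t} = F(t)I$) with the dense-set argument showing that $F \equiv 0$ along a non-critical curve would force $\Delta \equiv 0$.
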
 

\begin{proof} 
If $\Delta \equiv 0$ then each maximal integral curve $z$ has $F \equiv 0$ so that $\stackrel{\circ \circ}{z} \: = 2 F z \equiv 0$ and $z$ on $\R$ is affine, as we have seen. If $\Delta \not \equiv 0$ and the integral curve $z$ is not critical, then $F \not \equiv 0$ so that $z$ experiences finite-time blow-up, as we have seen. 
\end{proof} 

\medbreak 

Looking ahead to the next section, we remark that $\Delta^{\psi}$ is identically zero if and only if $\psi$ is monomial in the sense that there exists $w \in Z$ such that for all $z \in Z$ 
$$\psi(z) = \frac{1}{3} \Omega (w, z)^3.$$

\medbreak 

\section{Remarks} 

In this closing section, we record a number of miscellaneous remarks that stem from the body of this paper.

\medbreak

{\it \footnotesize COORDINATE EXPRESSIONS } 

\medbreak 

Though our whole approach has been intentionally coordinate-free, it is also of interest to see the development in terms of linear symplectic coordinates, not least because this may offer glimpses of a fresh perspective on classical invariant theory. 

\medbreak 

To this end, let $u, v \in Z$ satisfy $\Omega (u, v) = 1$ and so constitute a symplectic basis for $(Z, \Omega)$. Decompose $z \in Z$ as 
$$z = p u + q v$$
with 
$$p = p(z) = \Omega (z, v) , \; \; q = q(z) = \Omega (u, z).$$
Write 
$$a = \Omega (u, \Gamma_u u) , \; \; b = \Omega (u, \Gamma_v u),$$
$$c = \Omega (v, \Gamma_u v) , \; \; d = \Omega (v, \Gamma_v v).$$
With these conventions, the cubic  
$$\psi (z) = \frac{1}{3}  \Omega (z, \Gamma_z z)$$
has coordinate form  
$$\psi(z) = \frac{1}{3} \{ a p^3 + 3 b p^2 q + 3 c p q^2 + d q^3 \}$$
and the (vector) Hamilton equation 
$$\stackrel{\circ} {z}  \: = \Gamma_z z$$
becomes the familiar scalar pair 
$$\stackrel{\circ} {p}  \: = - \frac{\partial \psi}{\partial q}, \; \; \stackrel{\circ} {q}  \: = \frac{\partial \psi}{\partial p}.$$
The associated determinant 
$$\Delta(z) = - ({\rm Det} \; \Gamma_z)$$
assumes the form 
$$\Delta(z) = (b^2 - ac) p^2 + (bc - ad) p q + (c^2 - bd) q^2$$
and is the Hessian of $\psi$ (up to scale). We are not the first to observe that the discriminant 
$$(bc - ad)^2 - 4  (b^2 - ac) (c^2 - bd)$$
of this quadratic is precisely the discriminant 
$$a^2 d^2 - 3 b^2 c^2 + 4 a c^3 + 4 b^3 d - 6 a b c d$$
of the cubic 
$$ a p^3 + 3 b p^2 q + 3 c p q^2 + d q^3;$$
for example, see page 60 of [Salmon]. 

\medbreak 

Of course, a purely coordinate-based approach is possible. Let us indicate partial derivatives more succinctly by means of subscripts. With the cubic 
$$\psi(z) = \frac{1}{3} \{ a p^3 + 3 b p^2 q + 3 c p q^2 + d q^3 \}$$
as above, direct computation reveals that $\psi_{p q} \psi_q - \psi_p \psi_{q q}$ is divisible by $p$ and $\psi_{q p} \psi_p - \psi_q \psi_{p p}$ is divisible by $q$; in each case, the quotient is precisely $2 \{ (b^2 - ac) p^2 + (bc - ad) p q + (c^2 - bd) q^2 \}$ and we recover (twice) the determinant $\Delta$ in coordinate form. In fact, when the Hamilton equations 
$$\stackrel{\circ} {p}  \: = - \psi_q, \; \; \stackrel{\circ} {q}  \: = \psi_q$$ 
are differentiated by time once more, they yield precisely
$$\stackrel{\circ \circ} {p} \; = \psi_{p q} \psi_q - \psi_p \psi_{q q} , \; \; \stackrel{\circ \circ} {q} \; = \psi_{q p} \psi_p - \psi_q \psi_{p p}$$
and we recover the scalar components of $\stackrel{\circ \circ}{z} \: = 2 F z$. 

\medbreak 

{\it \footnotesize CANONICAL FORMS} 

\medbreak 

The simplest type of homogeneous cubic is a monomial: for $w \in Z$ define $\psi^w : Z \ra \R$ by requiring that for all $z \in Z$
$$\psi^w (z) = \frac{1}{3} \Omega (w, z)^3.$$
For this cubic, the corresponding symmetric linear map $\Gamma^w : Z \ra {\rm sp} (Z, \Omega)$ is given by 
$$\Gamma^w_z v = \Omega (z, w) \Omega (w, v) w$$
whenever $z, v \in Z$, and the associated determinant $\Delta^w$ is identically zero. 

\medbreak 

Conversely, let the cubic $\psi$ with corresponding symmetric linear map $\Gamma$ be such that the associated determinant $\Delta$ is identically zero. We claim that $\psi = \psi^w$ for a unique $w \in Z$; to justify this claim, we may of course assume that $\Gamma$ is not itself identically zero. Note that if $z \in Z$ then $\Gamma_z \Gamma_z = 0$ so that ${\rm Ran} \; \Gamma_z \subseteq {\rm Ker} \; \Gamma_z$ with equality precisely when $\Gamma_z \neq 0$. Note also that if $x, y \in Z$ then 
$$\Gamma_x \Gamma_y + \Gamma_y \Gamma_x = \{ \Delta (x + y) - \Delta (x) - \Delta (y) \} I = 0.$$
When $x, y, z \in Z$ let us write 
$$\gamma (x, y, z) = \Gamma_x \Gamma_y z.$$
Observe that this expression is now antisymmetric in its first pair of variables and was already symmetric in its last pair; thus 
$$\gamma (x, y, z) = \gamma (x, z, y) = - \gamma (z, x, y) = - \gamma (z, y, x) = \gamma (y, z, x) = \gamma (y, x, z) = - \gamma (x, y, z)$$
and so $\gamma$ vanishes identically. This proves that if $x, y \in Z$ then 
$${\rm Ran} \; \Gamma_y \subseteq {\rm Ker} \; \Gamma_x$$
and choosing any $z \in Z$ with $\Gamma_z \neq 0$ then gives 
$${\rm Ran} \; \Gamma_z \subseteq \cup_{y \in Z} {\rm Ran} \; \Gamma_y \subseteq \cap_{x \in Z} {\rm Ker} \; \Gamma_x \subseteq {\rm Ker} \; \Gamma_z $$
with equality of the end terms and hence equality throughout, whence 
$$ \cup_{y \in Z} {\rm Ran} \; \Gamma_y = \cap_{x \in Z} {\rm Ker} \; \Gamma_x$$
is a distinguished line in the plane $Z$. Let $w \in Z$ be a basis vector for this line. If $z \in Z$ then $\Gamma_z = \lambda_z ( \cdot ) w$ for some linearly $z$-dependent $\lambda_z$ in the dual $Z^*$: as $\Gamma_z$ kills $w$ so does $\lambda_z$ and therefore $\lambda_z = \mu_z \Omega (w, \cdot )$ for some $\mu_z \in \R$ also linear in $z$; this shows that 
$$\Gamma_z = \mu_z \Omega (w, \cdot ) w$$ 
for some $\mu \in Z^*$. Symmetry of $\Gamma$ forces $\mu$ to kill $w$ so that $\mu = \nu \Omega (\cdot , w)$ for some $\nu \in \R$. In the resulting formula 
$$\Gamma_z = \nu \Omega (z, w) \Omega (w, \cdot) w$$
the cube root of the scalar $\nu$ may be absorbed into $w$; this renders $w$ unique and we conclude that $\Gamma = \Gamma^w$ as claimed. 

\medbreak 

Thus, the assignment $w \mapsto \Gamma^w$ is a (cubic!) bijection from $Z$ to the set of all symmetric linear maps $Z \ra {\rm sp} (Z, \Omega)$ for which the associated determinant $\Delta$ is identically zero. 

\medbreak  

The same conclusion may be reached efficiently (though prosaically) using coordinates. From the identical  vanishing of $\Delta$ in the form 
$$(b^2 - ac) p^2 + (bc - ad) p q + (c^2 - b d) q^2 \equiv 0$$
we deduce (by setting $q = 0, p = 0$, and $p q \neq 0$ in turn) that $b^2 = ac, c^2 = b d$, and $ac = bd$. Let $\lambda$ be the cube root of $a$ and $\mu$ the cube root of $d$: then 
$$(\lambda^2 \mu)^3 = a^2 d = a \cdot ad = a \cdot bc = b \cdot ac = b \cdot b^2 = b^3$$
so that $\lambda^2 \mu = b$ and $\lambda \mu^2 = c$ likewise; it follows that the cubic is a monomial, namely
$$ a p^3 + 3 b p^2 q + 3 c p q^2 + d q^3 = (\lambda p + \mu q)^3.$$
\medbreak 

When the determinant $\Delta$ is not identically zero, there are three possibilities: \par
$\bullet$ $\Delta (z) = 0$ for $z$ on a line-pair through $0$ and $\Delta$ takes values of each sign elsewhere; \par 
$\bullet$ $\Delta (z) = 0$ for $z$ on a line through $0$ and $\Delta$ is positive elsewhere; \par 
$\bullet$ $\Delta(0) = 0$ and $\Delta$ is positive elsewhere;\\ 
and canonical forms may be developed for each of these. In connexion with these possibilities, we remark (from Theorem \ref{Delta}) that if $\Delta$ takes negative values then it also takes positive values. 

\bigbreak 

{\it \footnotesize EVALUATION OF $g_3$ } 

\medbreak 

Let $\psi : Z \ra \R$ be a homogeneous cubic and let the Hamiltonian vector field $\xi^{\psi}$ have $z : I \ra Z$ as an integral curve. As we have seen, $\stackrel{\circ \circ}{z} \: = 2 F z$ where the scalar function $F : I \ra \R$ satisfies $(\stackrel{\circ}{F} )^2 = 4 F^3 - g_3$ for some constant $g_3$ that depends on the integral curve $z$. 

\medbreak 
Let the initial point $z_0$ be such that $\psi(z_0) = 0$; as the Hamiltonian $\psi$ is constant along the integral curve, it follows that $\psi (z_t) = 0$ for all $t \in I$. If $z_0$ itself is zero, then of course $F \equiv 0$ and $g_3 = 0$. Now assume that $z_0$ is nonzero, so that $z_t$ is nonzero for all $t \in I$. For each $t \in I$ we have $0 = 3 \psi (z_t) = \Omega (z_t ,  \stackrel{\circ}{z}_t)$ whence (as $Z$ is a plane) $ \stackrel{\circ}{z}_t$ is parallel to $z_t$; say $ \stackrel{\circ}{z} = \lambda \; z$ for some scalar function $\lambda : I \ra \R$. On the one hand, 
$$F z = \Gamma_z \Gamma_z z = \Gamma_z  \stackrel{\circ}{z} \; = \Gamma_z \lambda z = \lambda \Gamma_z z = \lambda  \stackrel{\circ}{z} \; = \lambda^2 z;$$
on the other hand, 
$$2 F z = \; \stackrel{\circ \circ}{z} \; =  \; \stackrel{\circ}{\lambda} z + \lambda \stackrel{\circ}{z} \; = \; \stackrel{\circ}{\lambda} z + \lambda^2 z = \; \stackrel{\circ}{\lambda} z + F z.$$
Thus 
$$\stackrel{\circ}{\lambda} \; = F = \lambda^2$$
and so 
$$\stackrel{\circ}{F} \ = (\lambda^2)^{\circ} = 2 \lambda \stackrel{\circ}{\lambda} \; = 2 \lambda F = 2 \lambda^3.$$
It follows that in this case, 
$$g_3 = 4 F^3 - (\stackrel{\circ}{F} )^2 = 4 (\lambda^2)^3 - (2 \lambda^3)^2 = 0.$$
In short, an initial point $z_0$ with $\psi( z_0 ) = 0$ spawns an integral curve for which $g_3 = 0$. 

\medbreak

Let us offer some sample computations in coordinates. If $\psi = \frac{1}{3} (p^3 - q^3)$ then $\stackrel{\circ} {p}  \; = - \psi_q = q^2$ and $\stackrel{\circ} {q}  \ = \psi_p = p^2$ so that $ \stackrel{\circ \circ}{p} \; = 2 (pq) p$ and $ \stackrel{\circ \circ}{q} \; = 2 (pq) q$; thus $F = pq$ so $ \stackrel{\circ }{F} \; = F_q  \psi_p  - F_p  \psi_q = p^3 + q^3$ and $g_3 = 4 F^3 -  (\stackrel{\circ}{F} )^2 = 4 p^3 q^3 - (p^3 + q^3)^2 = - (p^3 - q^3)^2$ or $g_3 = - 9 \psi^2 \leqslant 0.$ Similarly, if $\psi = p^2 q + p q^2$ then $F = p^2  + p q + q^2$ and $\stackrel{\circ}{F} \; = (q - p)(2 p + q)(p + 2 q)$; after considerable simplification, $g_3 = 4 F^3 - (\stackrel{\circ}{F} )^2$ yields $g_3 = 27 \psi^2 \geqslant 0.$

\medbreak 

Finally, we remark (without proof - but see page 100 of [Salmon]) that classical invariant theory reappears in general: if 
$$\delta = a^2 d^2 - 3 b^2 c^2 + 4 a c^3 + 4 b^3 d - 6 a b c d$$
denotes the discriminant of the cubic $3 \psi$ then 
$$g_3 = - 9 \delta \: \psi^2$$
so 
$$(\stackrel{\circ}{F} )^2 = 4 F^3 + 9 \delta \: \psi^2.$$

\bigbreak 

\bigbreak

\begin{center} 
{\small R}{\footnotesize EFERENCES}
\end{center} 
\medbreak 

[Forsyth] A.R. Forsyth, {\it Theory of Functions of a Complex Variable}, Cambridge, First Edition (1893). 

\medbreak

[Salmon] G. Salmon, {\it Lessons Introductory to the Modern Higher Algebra}, Dublin, First Edition (1859). 

\medbreak

\end{document}